\documentclass[12pt]{article}

\usepackage{CJK,CJKnumb,amsmath}
\usepackage{amsfonts}
\usepackage{amsthm}
\usepackage{geometry}
\usepackage{mathrsfs}
\usepackage{amsmath}
\usepackage{amssymb}
\usepackage{amsfonts}
\usepackage[percent]{overpic}
\usepackage{bm}
\usepackage[all]{xy}
\usepackage{graphicx}
\usepackage{subfigure}
\usepackage{latexsym}
\usepackage[colorlinks, linkcolor=blue, anchorcolor=blue, citecolor=blue]{hyperref}
\usepackage{epigraph}
\usepackage{hyperref}
\usepackage{fancyhdr}
\usepackage{comment}

\usepackage{mathtools}
\mathtoolsset{showonlyrefs}

\numberwithin{equation}{section}
\usepackage{color}
\usepackage{orcidlink}

\newtheorem{theorem}{Theorem}

\newtheorem{que}{Question}

\newtheorem{proposition}{Proposition}[section]

\newtheorem{lemma}{Lemma}[section]
\theoremstyle{remark}

\newtheorem*{ack}{Acknowledgement}
\newtheorem*{AI}{AI Use Disclosure}

\def\re{\operatorname{Re}}
\def\Im{\operatorname{Im}}

\begin{document}

\title{Non-analyticity of hairs of exponential maps}
\author{Weiwei Cui, Jiaxing Huang and Lingrui Wang}
\date{\today}

\maketitle

\begin{abstract}
Let $f_{\lambda}(z)=\lambda e^z$, where $0<\lambda<1/e$. In 1984, Devaney and Krych showed that the Julia set consists of pairwise disjoint hairs. Viana further in 1988 proved that these hairs are $C^{\infty}$. In this paper we show that hairs are not analytic except for trivial ones. This solves a long-standing open question.
\end{abstract}

\section{Introduction and main results}

Let $f(z)=\lambda e^z$, where $0<\lambda<1/e$. The Fatou set of $f$ is defined as the set of points in the complex plane where the iterates of $f$ form a normal family. 
The complement of the Fatou set is called the Julia set of $f$, denoted by $J(f)$. Devaney and Krych proved that $J(f)$ consists of uncountably many pairwise disjoint curves connecting a finite point (called the endpoints) to $\infty$ \cite{devaney8}. The curves are called hairs or dynamic rays. These are analogues of external rays from polynomial dynamics. Hairs have played an important role in the study of the dynamical properties of exponential maps.

In 1988, Viana proved that hairs are smooth, and asked whether they are analytic or not \cite{viana88}; see also \cite[Question 7.19]{bergweiler26}. We solve this question by giving a negative answer.

We say that a hair is trivial if it is a preimage of the hair in the real axis.

\begin{theorem}\label{maintheorem}
Let $f(z)=\lambda e^z$ with $0<\lambda<1/e$. Then each hair is not analytic, except for the trivial ones.
\end{theorem}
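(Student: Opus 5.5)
Suppose a hair $\gamma$ is analytic, i.e. it is locally the image of a real interval under a real-analytic map with nonvanishing derivative. The plan is to show that $\gamma$ is then contained in an analytic continuation of a real-analytic arc that is forced to coincide with a preimage of the real hair. Note first that $f$ maps hairs to hairs, is locally conformal, and that $f^{-1}$ has univalent branches on any simply connected domain. Consequently, if $\gamma$ is analytic then so are all its forward images $f^n(\gamma)$. Moreover, if some $f^n(\gamma)$ is a preimage of the real hair $[\beta,\infty)$, where $\beta$ is the repelling fixed point on $\mathbb{R}$, then $\gamma$ is trivial. So it suffices to show that any analytic hair $\gamma$ has some iterate lying on a preimage of the real axis.

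The key tool is the asymptotic shape of hairs. Far out, a hair with address $s=(s_0,s_1,\dots)$ lies in the strip $\{|\Im z-2\pi s_0|<\pi\}$ and is parametrized by $\re z=t$ via $z=t+2\pi i s_0+\log\lambda^{-1}$-type corrections plus $O(e^{-t})$ terms coming from the next hair: $\gamma_s(t)=t+2\pi i s_0+\log(\dots)+\varepsilon(t)$. Here $\gamma_s(t)$ is obtained as $\gamma_s(t)=L_{s_0}(\gamma_{\sigma s}(F(t)))$ with $L_k(w)=\log(w/\lambda)+2\pi i k$. Writing the hair locally as a graph $y=h(x)$ over large $x$, the functional equation gives
\begin{equation}
h(x)=2\pi s_0+\arg\bigl(X+i\,h_1(X)\bigr),\qquad X=\lambda e^{x}\cos h(x)\approx \lambda e^x .
\end{equation}
The plan is to show that $h$ extends analytically to a sector or strip around the large positive reals only if the successive graphs $h_n$ (of $f^n(\gamma)$) are eventually constant. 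Analyticity of $\gamma$ at a finite point propagates to the whole hair by the identity principle along the curve: a real-analytic curve which is a real-analytic continuation of itself is determined by any subarc. Hence an analytic hair admits a Schwarz reflection $\rho_\gamma$ defined in a neighbourhood of the whole curve. The key step is then to use the dynamics to enlarge this neighbourhood. Since $f^n$ maps $\gamma$ onto $f^n(\gamma)$ and conjugates reflections, we get $\rho_{f^n\gamma}\circ f^n=f^n\circ\rho_\gamma$. Since $|(f^n)'|\to\infty$ along hairs, which escape with $\re$ growing like iterated exponentials, the reflection $\rho_{f^n\gamma}$ is defined on neighbourhoods of uniformly large size, taking all branches into account.

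The main obstacle is turning this into a contradiction. I would pull back: on a neighbourhood of $\gamma$ of size $\delta$, the reflection of $f^n(\gamma)$ is defined on a disk of radius comparable to $\delta|(f^n)'|$ around points far out. Passing to a limit in $n$ with the normalization $w\mapsto f^n(z_0+w/(f^n)'(z_0))$, after a suitable subsequence one obtains an anti-conformal involution of a half-plane or of the whole plane fixing a limiting curve. The limiting curve must be a straight line, since hairs straighten: $h_n'\to0$ exponentially. Then one transfers back the rigidity: the reflection across $f^n\gamma$ agrees on a huge disk with the reflection across a horizontal line up to an error of size $O(e^{-\re})$. Combined with the fact that $f^n\gamma$ must avoid the neighbouring hairs, which lie at distance $2\pi$, the reflection must map $J(f)$ to itself locally. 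This forces the reflection to be $z\mapsto\bar z+4\pi i k$, i.e. the line $\Im z=2\pi k$, which is a translate of the real hair. Verifying this last step, namely that an anti-conformal symmetry of $J(f)$ near infinity in a strip must be conjugation, is the hard part. I would try to prove it using the fact that, for non-real addresses, the hairs nearby are asymmetric: hair addresses are not invariant under the induced map $s\mapsto$ reflected address. The exponential-order expansion of $h$ then shows that the first nonzero correction term $\Im$-coefficient, of order $e^{-x}$ times $\sin$ of the next imaginary part, would need to vanish at all levels. Vanishing at all levels means every $s_n$-hair is horizontal, i.e. the address is that of a preimage of the real hair, and so $\gamma$ is trivial.
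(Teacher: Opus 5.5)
There is a genuine gap. The step that is supposed to produce the contradiction is never supplied, and the ingredients you lean on do not lead to it.

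\textbf{Problems with the sketch.}
\begin{itemize}
\item Your normalized limit sees only a straight line, and this happens for \emph{every} hair, trivial or not. So the limit has forgotten the address, and there is no rigidity to ``transfer back.''
\item The claim that the reflection across $f^n(\gamma)$ agrees with $u\mapsto\bar u+4\pi i s_n$ on a huge disk is asserted, not derived. Even if it were true, it does not contradict anything by itself.
\item Nothing shows that this reflection preserves $J(f)$, even locally. Avoiding the $2\pi i$-translates of $f^n(\gamma)$ gives no such constraint. Moreover, other hairs accumulate on $f^n(\gamma)$ (the Julia set is a Cantor bouquet), so there is no separation at scale $2\pi$.
\item Real-analyticity by itself forces no term of the expansion of $h$ to vanish. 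So ``the first nonzero correction term must vanish at all levels'' is not a consequence of anything you have shown; the obstruction has to be quantitative.
\item The identity principle does not propagate analyticity of a subarc to the whole hair. You do not need that anyway: it suffices to rule out one analytic subarc, after pushing it far to the right by $f^N$ and shifting $\underline{s}$.
\end{itemize}

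\textbf{The missing idea.} What is needed is a quantitative mismatch, set up in a \emph{fixed parameter disk} rather than in the $z$-plane.
\begin{itemize}
\item Extend $\phi$ holomorphically to $D(0,r)$ and set $\Psi_n(\zeta)=f^n(\phi(\zeta))-\overline{f^n(\phi(\bar\zeta))}-4\pi i s_n$.
\item For real $t$ you have $\Psi_n(t)=2i\bigl(\Im f^n(\phi(t))-2\pi s_n\bigr)=2i\arg f^{n+1}(\phi(t))$. Since $\re f^{n+1}(\phi(t))>f^{n+1}(\eta)$ while $|\Im f^{n+1}(\phi(t))|\le f^{n+1}(\xi)$ with $\xi<\eta$, this is at most $2e^{f^n(\xi)-f^n(\eta)}$. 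So $\Psi_n$ is super-exponentially flat on the real segment.
\item On the whole disk, $|f^n(u)|\le f^n(|u|)$ gives $|\Psi_n|\le 4f^n(M)$ with $M<f(\eta)$. Hence $\log|\Psi_n|\lesssim f^{n-1}(M)\ll f^n(\eta)$.
\item The two-constants theorem on $D(0,r)\setminus[-r',r']$ then forces $\Psi_n(\zeta)\to0$ at each fixed non-real $\zeta$.
\item The Fatou set is dense and equals the basin of the \emph{real} attracting fixed point $\alpha$. Pick one $\zeta$ with both $\phi(\zeta)$ and $\overline{\phi(\bar\zeta)}$ Fatou. Then $f^n(\phi(\zeta))-\overline{f^n(\phi(\bar\zeta))}\to\alpha-\alpha=0$. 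Hence $4\pi i s_n\to0$, so $s_n=0$ for large $n$, and the hair is trivial.
\end{itemize}

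Your picture of the reflection being close to $u\mapsto\bar u+4\pi i s_n$ is essentially the statement $\Psi_n\to0$. The parameter formulation is what makes it usable. Univalent pullbacks in the $z$-plane only give you the reflection on neighbourhoods that shrink with $n$, and on their images far to the right nothing contradictory is visible. In the parameter disk the domain stays fixed even though $f^n\circ\phi$ is far from injective. So a single Fatou parameter $\zeta$ serves for all $n$, even though $f^n(\phi(\zeta))$ lands near $\alpha$, far outside any region you control in the $z$-plane.
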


Kisaka and Shishikura generalized the result of Viana to entire functions of the form $P(z)e^{Q(z)}$, where $P$ and $Q$ are polynomials; \cite{shishikura7}. Analyticity of hairs is unknown for these functions.

Besides the above mentioned results, existence of hairs in the Julia sets was established by Bara\'nski for disjoint type functions of finite order \cite{baranski2}. It is thus natural to see if these hairs could also have certain regularities. However, one cannot in this case expect a similar result as that of Viana. Comd\"uhr constructed a disjoint type function of finite order with nowhere differentiable hairs \cite{comduhr19}. In a recent joint work of the first author with Mart\'i-Pete, Pardo-Sim\'on and Rempe, we constructed hairs of arbitrary Hausdorff and packing dimensions (subject to trivial relations) \cite{cmpr}. This raised the following question.

\begin{que}
Is there an entire function with a Cantor bouquet Julia set whose hairs are analytic, except for trivial ones?
\end{que}

In general, the topology of components of Julia sets of disjoint type functions, which need not be hairs, can be very complicated. We refer to \cite{rempe9} for a thorough discussion.

\section{Some preliminaries}

Throughout the paper, we will only consider exponential maps $f(z)=\lambda e^z$ for $0<\lambda<1/e$. For simplicity, we put $f=f_{\lambda}$.

The map $f$ has an attracting fixed point $\alpha$ and a repelling fixed point $\beta$ on the real axis satisfying $0<\alpha<1<\beta$. Devaney and Krych proved that the Fatou set of $f$ coincides with the attracting basin of $\alpha$; see \cite{devaney8}. Denoted it by $\mathcal{A}(\alpha)$. The Julia set of $f$ has zero Lebesgue measure \cite{mcmullen11}. Thus, we have the following result.

\begin{lemma}\label{basin}\
\begin{itemize}
\item $\mathcal{A}(\alpha)\supset H:=\{z \in \mathbb{C}: \re z <\beta\}$;
\item $\mathcal{A}(\alpha)$ is dense in $\mathbb{C}$.
\end{itemize}
\end{lemma}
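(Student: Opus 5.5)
The plan is to reduce both parts to facts already quoted in this section: the Devaney--Krych result that the Fatou set equals $\mathcal{A}(\alpha)$, and McMullen's result that $J(f)$ has zero Lebesgue measure. For the first item, I will show that the iterates of $f$ are uniformly bounded on $H$, so that $H$ lies in the Fatou set by Montel's theorem. For the second item, I will argue that a set of measure zero has empty interior.

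For the first item, I start with the identity $|f(z)|=\lambda e^{\re z}$. If $\re z<\beta$, then $|f(z)|<\lambda e^{\beta}=\beta$, using that $\beta$ is a fixed point of $f$. Hence $f(H)\subset D(0,\beta)$. Every point $w$ of the disk $D(0,\beta)$ satisfies $\re w\le |w|<\beta$, so $D(0,\beta)\subset H$. Combining these gives $f(H)\subset D(0,\beta)\subset H$. By induction, every iterate satisfies $f^n(H)\subset D(0,\beta)$ for all $n\ge 1$. The family $\{f^n|_H\}_{n\ge1}$ therefore omits every value outside $D(0,\beta)$, in particular two fixed values. By Montel's theorem it is normal on the open set $H$, so $H$ is contained in the Fatou set, which equals $\mathcal{A}(\alpha)$ by \cite{devaney8}.

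If one wants an argument that does not invoke Devaney--Krych for this item, there is a direct route. Let $g(x)=\lambda e^x$ on the real line and set $r_n=|f^n(z)|$. Then $r_{n+1}=\lambda e^{\re f^n(z)}\le g(r_n)$, and $r_1<\beta$. Since $g$ is increasing and every real point below $\beta$ is attracted to $\alpha$, this gives $r_n\le g^{n-1}(r_1)$, so the orbit of $z$ is eventually trapped in a small neighbourhood of $\alpha$. Convergence to $\alpha$ then follows because $\alpha$ is attracting.

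For the second item, I note that $\mathbb{C}\setminus\mathcal{A}(\alpha)=J(f)$ has zero Lebesgue measure by \cite{mcmullen11}. A set of measure zero contains no disk, so $J(f)$ has empty interior. Equivalently, its complement $\mathcal{A}(\alpha)$ is dense in $\mathbb{C}$.

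Neither step presents a real obstacle. The only point needing care is the inclusion $D(0,\beta)\subset H$, which is what makes $H$ forward invariant and gives the uniform bound on all iterates.
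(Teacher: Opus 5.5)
Your main argument is correct, but for the first item it takes a different route from the paper's. The paper compares moduli with the real orbit. It gets $f(H)\subset D(0,\beta)$, then uses $|f(z)|\le f(|z|)$ on $D(0,\beta)$ to obtain $|f^n(z)|\le f^n(|z|)\to\alpha$, and concludes $H\subset\mathcal{A}(\alpha)$. You instead add the inclusion $D(0,\beta)\subset H$, which gives $f^n(H)\subset D(0,\beta)$ for all $n\ge 1$. You then get normality on $H$ from Montel and invoke the Devaney--Krych identification of the Fatou set with $\mathcal{A}(\alpha)$. The paper quotes that identification anyway, and both of you need it for the second item. Your soft route is airtight, whereas the comparison route needs one more step (see below). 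You could also make the first item independent of Devaney--Krych. The family $\{f^n\}$ is uniformly bounded on the connected set $H$, which contains $\alpha$, and $f^n\to\alpha$ near the attracting point $\alpha$. Vitali's theorem then gives $f^n\to\alpha$ on all of $H$. Your second item is exactly the paper's argument.

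Your optional ``direct route'' is essentially the paper's comparison argument, and it contains an invalid inference. The bound $r_n\le g^{n-1}(r_1)\to\alpha$ only says that, for every $\delta>0$, the orbit eventually lies in $\overline{D(0,\alpha+\delta)}$. That is a disk centred at $0$ containing points such as $-\alpha$ and $i\alpha$, not a small neighbourhood of $\alpha$. So you cannot yet use that $\alpha$ is attracting; the paper's write-up also leaves this step implicit.

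One way to close it is as follows. Since $\alpha<1$, the orbit eventually enters the unit disk $\mathbb{D}$. Moreover $f(\mathbb{D})\subset D(0,\lambda e)$ with $\lambda e<1$. By the Schwarz--Pick lemma, $f$ then contracts the hyperbolic metric of $\mathbb{D}$ by a factor at most $\lambda e$, so every orbit in $\mathbb{D}$ converges to the fixed point $\alpha$. Alternatively, apply Vitali's theorem to the bounded family $\{f^n\}$ on the connected disk $D(0,\beta)$, which $f$ maps into itself. Without such a step, rely on your Montel argument rather than on this alternative.
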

\begin{proof}
Both are well known. The first follows since, for every $z \in H$, 
\[|f(z)|=\lambda e^{\re z}< \lambda e^{\beta}=\beta.\]
So $f(H)\subset D(0,\beta)$. But for any $z\in D(0,\beta)$ we have
\[|f(z)|=\lambda e^{\re z}\leq \lambda e^{|z|}=f(|z|).\]
Thus
\[|f^{n}(z)|\leq f^{n}(|z|)\to\alpha\]
as $n \to \infty$. So we see that $H\subset\mathcal{A}(\alpha)$.

The second is also true by the result of McMullen mentioned above.
\end{proof}

For $k \in \mathbb{Z}$, define the horizontal strip
\[P_k=\left\{z \in \mathbb{C}: -\pi+2k\pi < \Im(z) \le \pi+2k\pi\right\}.\]
So, $f$ maps the interior of each $P_k$ onto $\mathbb{C} \setminus (-\infty,0]$ conformally.

By Lemma \ref{basin}, for every $z \in J(f)$, the iterates $f^n(z)$, $n \ge 0$, stay in the right half plane $\{z \in \mathbb{C}: \re z \ge b\}$. Thus for every $n \ge 0$, there is a unique integer $s_n$ such that $f^n(z) \in P_{s_n}$, and 
\begin{equation}\label{eq: imaginary part}
    \left|\Im f^n(z)-2s_n\pi\right|<\frac{\pi}{2}.
\end{equation} 

Therefore, every $z \in J(f)$ corresponds to a sequence $\underline{s}(z)=(s_0, s_1, \ldots) \in \mathbb{Z}^{\mathbb{N}}$, which is called the external address of $z$. On the contrary, every sequence $\underline{s}$ defines a set 
\[J_{\underline{s}}=\left\{z \in J(f): \underline{s}(z)=\underline{s}\right\},\]
which is a hair (if non-empty).

A sequence $\underline{s}=(s_0, s_1, \ldots)$ is called exponentially bounded, if there is $x>0$ such that 
$$2\pi |s_n| \le f^n(x)$$
for every $n \ge 0$. 

\begin{lemma}\label{lm: non-empty iff exponentially bounded }
    $J_{\underline{s}}$ is non-empty if and only if $\underline{s}$ is exponentially bounded.
\end{lemma}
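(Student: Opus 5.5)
Both directions rest on a comparison between the orbit of a point with address $\underline{s}$ and the real orbit $f^n(x)$. Throughout, we use that every point of $J(f)$ has all iterates in $\{\re z\ge \beta\}$ by Lemma \ref{basin}, together with the estimate \eqref{eq: imaginary part}.

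\emph{Necessity.} Let $z\in J_{\underline{s}}$ and put $z_n=f^n(z)$. Since $|z_{n+1}|=\lambda e^{\re z_n}\le \lambda e^{|z_n|}=f(|z_n|)$, induction gives $|z_n|\le f^n(|z|)$. Combining this with \eqref{eq: imaginary part},
\[
2\pi|s_n|\le |\Im z_n|+\tfrac{\pi}{2}\le f^n(|z|)+\tfrac{\pi}{2}.
\]
The constant $\pi/2$ is absorbed by taking $x=|z|+c$ for a suitable constant $c$. Indeed, $f$ is expanding on $[\beta,\infty)$, with $f'(t)=f(t)\ge\beta>1$ there, so $f^n(x)-f^n(|z|)\ge \beta^n c\ge c\ge \pi/2$. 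Note also that $|z|\ge\re z\ge\beta$, so we stay in the expanding range. This gives exponential boundedness.

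\emph{Sufficiency.} Let $\underline{s}$ be exponentially bounded with constant $x$. We may enlarge $x$, since $f^n$ is increasing. For each $k$ let $L_k$ be the branch of $f^{-1}$ that maps $\mathbb{C}\setminus(-\infty,0]$ into the interior of $P_k$, namely $L_k(w)=\log(w/\lambda)+2k\pi i$ with the principal branch. Set $T_n=\{w:\re w\ge f^n(t)\}$ for a large parameter $t\ge x$. The plan is to show that, for $t$ large, the composition
\[
\Phi_N=L_{s_0}\circ L_{s_1}\circ\cdots\circ L_{s_{N-1}}
\]
is well defined on a suitable subset of $T_N$, and that the points $\Phi_N(f^N(t)+2\pi i s_N)$ converge as $N\to\infty$ to a point $z$ with $f^n(z)\in P_{s_n}$ and $\re f^n(z)\ge\beta$ for all $n$. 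Such a point lies in $J(f)$: its orbit stays in the right half-plane far from $\alpha$, so it is not in $\mathcal A(\alpha)$, which is the whole Fatou set.

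The key estimate will be the following. If $\re w\ge f^{n+1}(t)$ and $|\Im w|\le 2\pi|s_{n+1}|+\pi\le f^{n+1}(x)+\pi$, then
\[
\re L_{s_n}(w)=\log|w/\lambda|\ge \log\bigl(f^{n+1}(t)/\lambda\bigr)=f^n(t),
\]
and $L_{s_n}(w)$ lies in $P_{s_n}$. Moreover $|L_{s_n}'(w)|=1/|w|\le 1/f^{n+1}(t)$, so the backward branches contract strongly. Uniform contraction then makes the points above a Cauchy sequence. A second possible route is to use the nested-compact-sets argument of Devaney--Krych or Schleicher--Zimmer.

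The main obstacle is controlling the imaginary parts, so that each backward image really lands in the region where the next branch is applied; we want $|\Im|$ bounded by roughly $2\pi|s_n|+\pi$. Here the condition $2\pi|s_n|\le f^n(x)\le f^n(t)$ is essential: $|\Im L_{s_n}(w)-2\pi s_n|\le\pi$ holds automatically, and the real part is at least $f^n(t)$. So the set
\[
\{\re w\ge f^n(t),\ |\Im w-2\pi s_n|\le\pi\}
\]
is mapped by $f$ onto a region containing $\{\re w\ge f^{n+1}(t),\ |\Im w-2\pi s_{n+1}|\le\pi\}$, because $|w|\ge f^{n+1}(t)$ there and $f^{n+1}(t)\ge 2\pi|s_{n+1}|$. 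Covering this requires checking that the argument stays away from the negative real axis, which I will ensure by taking $t$ large (say $t\ge x+2$). Once this invariance is established, the nested intersection is nonempty and gives a point of $J_{\underline{s}}$.
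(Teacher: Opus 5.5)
Your necessity argument is correct. The sufficiency argument, however, misplaces where exponential boundedness enters, and as written neither of your routes actually uses it.

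\textbf{The invariance holds for every address.} For $\re w\ge f^{n+1}(t)$ you have $|w|\ge \re w$, so $\re L_{s_n}(w)\ge f^n(t)$ and $|\Im L_{s_n}(w)-2\pi s_n|<\pi/2$, with no condition on $\Im w$. Likewise, $f$ maps $S_n=\{\re w\ge f^n(t),\ |\Im w-2\pi s_n|\le\pi\}$ onto $\{|u|\ge f^{n+1}(t)\}$, which contains $S_{n+1}$ whatever $s_{n+1}$ is. So the step you call the main obstacle is trivial and independent of $\underline{s}$. This is why the nested-intersection fallback fails as stated. The sets $\{z: f^n(z)\in S_n,\ 0\le n\le N\}$ are closed, nested and nonempty, but they are unbounded. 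For an address that is not exponentially bounded, every stage is still nonempty while the intersection is empty, since any point of the intersection would lie in $J_{\underline{s}}$, contradicting your own necessity part. To make that route work you would have to truncate the strips on the right, and only there would the growth condition be needed.

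\textbf{The missing step is the displacement bound.} The claim that ``uniform contraction makes the points a Cauchy sequence'' is not justified either. $T_N$ is unbounded, so the bound $|\Phi_N'|\le\prod_{k=1}^N f^k(t)^{-1}$ on $T_N$ controls
\[
|\Phi_{N+1}(p_{N+1})-\Phi_N(p_N)|=|\Phi_N(L_{s_N}(p_{N+1}))-\Phi_N(p_N)|,\qquad p_N=f^N(t)+2\pi i s_N,
\]
only once you bound the displacement $|L_{s_N}(p_{N+1})-p_N|$. This is exactly where the hypothesis is used. Since $2\pi|s_{N+1}|\le f^{N+1}(x)\le f^{N+1}(t)$, you get
\[
\re L_{s_N}(p_{N+1})-f^N(t)=\tfrac12\log\Bigl(1+\bigl(2\pi s_{N+1}/f^{N+1}(t)\bigr)^2\Bigr)\le\tfrac12\log 2,\qquad |\Im L_{s_N}(p_{N+1})-2\pi s_N|=|\arg p_{N+1}|\le\tfrac{\pi}{4}.
\]
Both points lie in the convex set $T_N$. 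So the increments are at most $\bigl(\tfrac12\log 2+\tfrac{\pi}{4}\bigr)\prod_{k=1}^N f^k(t)^{-1}$, which is summable. Without the growth condition the displacement is roughly $\log|s_{N+1}|-f^N(t)$ and can outrun the contraction.

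\textbf{What remains is fine.} Take $t\ge\max\{x,\beta\}$. The limit $z$ satisfies $f^n(z)\in\overline{L_{s_n}(T_{n+1})}\subset P_{s_n}$ and $\re f^n(z)\ge f^n(t)\ge\beta$, so $z\notin\mathcal{A}(\alpha)$ and hence $z\in J_{\underline{s}}$.

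With the displacement estimate added, your first route is the Schleicher--Zimmer construction. The paper does not prove this lemma; it only cites Devaney--Krych and Schleicher--Zimmer.
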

\begin{proof}
This is also well known. See, for instance, \cite{devaney8} and also \cite{schleicher4}.
\end{proof}

Fix an exponentially bounded address $\underline{s}=(s_0, s_1, \ldots)$. Let $z,w \in J_{\underline{s}}$ be distinct. Put 
\[z_n=f^n(z),\quad w_n=f^n(w)\]
and
\[d_n=\re \; z_n-\re \; w_n.\] 
\begin{lemma}\label{lm: real part goes to infinity}
    $|d_n| \to \infty$ and $\mathrm{sgn} \; d_n$ is eventually constant. 
\end{lemma}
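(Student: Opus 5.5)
Since $z$ and $w$ have the same external address, for every $n$ both $z_n$ and $w_n$ lie in $P_{s_n}$ and satisfy \eqref{eq: imaginary part}. Hence $|\Im z_n-\Im w_n|<\pi$ for all $n$, and the whole difference between the orbits must show up in the real parts. The plan is to compare $|z_{n+1}-w_{n+1}|$ with $|d_n|$ directly from the formula $f(z)=\lambda e^z$.

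\textbf{Step 1: an expansion inequality.} Write $z_{n+1}=\lambda e^{z_n}$ and $w_{n+1}=\lambda e^{w_n}$. If $d_n\ge 0$, then
\[
|z_{n+1}-w_{n+1}|=\lambda e^{\re z_n}\,\bigl|1-e^{-(z_n-w_n)}\bigr|\ge \lambda e^{\re z_n}\,(1-e^{-d_n}),
\]
and symmetrically when $d_n<0$. The main term $\lambda e^{\re z_n}=|z_{n+1}|$ is large, because by Lemma \ref{basin} the orbit stays in $\{\re z\ge\beta\}$. Combining this with $|\Im z_{n+1}-\Im w_{n+1}|<\pi$ gives
\[
|d_{n+1}|\ge |z_{n+1}-w_{n+1}|-\pi .
\]

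\textbf{Step 2: the real parts never coincide.} First I need $d_n\neq 0$ for all $n$. If $d_n=0$ for some $n$, then $|z_n-w_n|<\pi$ with equal real parts and $z_n\neq w_n$ (since $f$ is injective on each $P_k$, distinct points with the same address have distinct iterates). Then $z_{n+1}$ and $w_{n+1}$ have the same modulus and arguments differing by less than $\pi$. I will rule this out using the geometry of a hair: $J_{\underline s}$ is a curve on which $\re$ is monotone for large real part. Alternatively, I will use Step 1 at a later index, after showing that the differences cannot all stay small.

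\textbf{Step 3: the differences are unbounded.} To get $|z_n-w_n|$ unbounded, I use the standard argument. If $|z_n-w_n|\le C$ for all $n$, then the inverse branches $L_n$ of $f$ along the address are uniformly contracting on $\{\re z\ge\beta\}$, with $|(f^{-1})'|=1/|z|\le 1/\beta<1$. This gives
\[
|z-w|=|L_0\circ\cdots\circ L_{n-1}(z_n)-L_0\circ\cdots\circ L_{n-1}(w_n)|\le C\beta^{-n}\to 0,
\]
contradicting $z\neq w$. The segment between $z_n$ and $w_n$ stays in a convex region $\{\re\ge\beta\}$ inside a strip of width less than $2\pi$, so the branches are defined there. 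Hence $\limsup|z_n-w_n|=\infty$, and since the imaginary difference is bounded, $\limsup|d_n|=\infty$.

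\textbf{Step 4: from unbounded to tending to infinity.} Once $|d_N|\ge 1$, Step 1 gives
\[
|d_{N+1}|\ge \lambda e^{\max(\re z_N,\re w_N)}(1-e^{-1})-\pi\ge c\,e^{|d_N|}\,\lambda e^{\beta}-\pi ,
\]
using $\max(\re z_N,\re w_N)\ge\beta+|d_N|$. For $|d_N|$ large enough this exceeds $|d_N|+1$, so $|d_n|$ increases to $\infty$ from that index on. This is the main technical point: choosing a threshold $M$ with $c\,\beta e^{M}-\pi>M+1$, which Step 3 guarantees is eventually exceeded.

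\textbf{Step 5: the sign.} Suppose $|d_n|\ge M$ and $d_n>0$. Then $|z_{n+1}|/|w_{n+1}|=e^{d_n}$ is huge, while both points lie in the same strip with imaginary parts within $\pi$. So $\re z_{n+1}\ge |z_{n+1}|-|\Im z_{n+1}|$ dominates $\re w_{n+1}\le |w_{n+1}|$, provided the imaginary parts are controlled. Here $|\Im z_{n+1}|\le|\Im w_{n+1}|+\pi\le |w_{n+1}|+\pi$, which gives $d_{n+1}>0$. Thus the sign is eventually constant. The delicate part is making Step 3 rigorous; the backward-contraction argument there is what I would try first.
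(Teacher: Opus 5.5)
Your proof is correct and follows essentially the paper's route: expansion by the factor $\beta$ on $\{\re z\ge\beta\}$, together with $|\Im(z_n-w_n)|<\pi$, gives $|d_n|\to\infty$, and the ratio $|z_{n+1}|/|w_{n+1}|=e^{d_n}$ fixes the sign. Your inverse-branch form of the expansion is cleaner than the paper's curve argument (it needs neither Viana's theorem nor a curve in $J(f)$), and once you record that each branch maps $\{\re u\ge\beta\}$ into itself, because $\re L_k(u)=\log|u|-\log\lambda\ge\log(\beta/\lambda)=\beta$, it gives $|z_n-w_n|\ge\beta^n|z-w|$ for every $n$ directly, so Step 4 is redundant and Step 2, which nothing later uses, should be dropped.
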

\begin{proof}
    Consider $f^n: J(f) \to J(f) \subset \{z \in \mathbb{C}: \re z \ge \beta\}$. Since $P_{s_n}$ has width $2\pi$, the map $f: P_{s_{n}} \cap J(f) \to \{z \in \mathbb{C}: \re \; z \ge \beta\}$ is injective. Let $\gamma: [0,1] \to J(f)$ be an arbitrary rectifiable Jordan curve connecting $z$, $w$, with $\gamma(0)=z$, $\gamma(1)=w$, which is mapped to the segment $[z_n, w_n]$ by $f^n$. (Note that this is possible since Viana proved that hairs are $C^{\infty}$.) Denote by $\ell(\gamma)$ the length of $\gamma$. Then 
\[|z_n-w_n|=\int_{\gamma} |(f^n)'(\zeta)||d\zeta| \ge \beta^n \ell(\gamma) \ge \beta^n|z-w|,\]
since $J(f) \subset \{z \in \mathbb{C}: \re z \ge \beta\}$ by Lemma \ref{basin}. By \eqref{eq: imaginary part}, $|\Im(z_n-w_n)|<\pi$. Thus $|d_n| \to \infty$. 

Now let $n$ be sufficiently large so that $d_n>L$, where $L=\log (1+\pi/\beta)$. Then $|z_{n+1}|/|w_{n+1}| =e^{d_n} >1+\pi/\beta$. Suppose by contradiction that $d_{n+1} \le 0$, i.e. $\re z_{n+1} \le \re w_{n+1}$. Then $|z_{n+1}| \le |w_{n+1}|+\pi$. This implies that
\[\frac{|z_{n+1}|}{|w_{n+1}|}\leq 1+\frac{\pi}{|w_{n+1}|} \leq 1+\frac{\pi}{\beta},\]
a contradiction. This argument also works for $d_n<0$.
\end{proof}

By Lemma \ref{lm: real part goes to infinity}, without loss of generality, we may assume that $d_n \to \infty$ as $n \to \infty$. Then for sufficiently large $n$, using Lemma \ref{basin} we obtain
\begin{equation}\label{eq: real part larger than modulus}
    \begin{split}
        \frac{\re z_{n+1}}{|w_{n+1}|} & = \frac{\sqrt{|z_{n+1}|^2-(\Im  z_{n+1})^2}}{|w_{n+1}|} \\
        &\ge \sqrt{e^{2d_n}-\frac{(|\Im w_{n+1}|+\pi)^2}{|w_{n+1}|^2}} \\
        & \ge \sqrt{e^{2d_n}-(1+\pi/\beta)^2}\\
        & \to +\infty 
    \end{split}
\end{equation}
as $n\to\infty$.

\section{Proof of Theorem \ref{maintheorem}}
We prove Theorem \ref{maintheorem} by contradiction. Suppose that $J_{\underline{s}} \neq \emptyset$ is non-trivial and that there is a real analytic map
\[\phi:(-\epsilon, \epsilon) \to J_{\underline{s}}.\]
Let $z,w$ be two distinct points in $\phi((-\epsilon,\epsilon))$ and put
$$z_n=f^n(z),\quad w_n=f^n(w),$$
$$d_n=\re \; z_n-\re \; w_n.$$
Since $f'\neq 0$, $f^n\circ\phi$ is a real analytic map into $f^n(J_{\underline{s}})$, and by Lemma \ref{lm: real part goes to infinity}, we may assume that $\beta< |w|+2\pi <\re z-2\pi $, and $d_n \to +\infty$.

Put $\xi=|w|+2\pi$, $\eta=\re z-2\pi$. Furthermore, $z, w$ can be chosen such that $\xi<\eta<f(\eta)$ and $|z|<f(\eta)$.

Recall that the external address $\underline{s}$ is exponentially bounded. 

\begin{lemma}\label{lm: estimate on the address}
    For $n \ge 0$, we have $(2|s_n|+1)\pi \le f^n(\xi)$. 
\end{lemma}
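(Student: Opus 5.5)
We compare the orbit of $w$ with the real orbit of $\xi=|w|+2\pi$. The claim to prove by induction on $n$ is
\[
|w_n|+2\pi\le f^n(\xi)\quad\text{for all } n\ge 0 .
\]
Granting this, the lemma follows directly from \eqref{eq: imaginary part}. Since $w_n\in P_{s_n}$ and $|\Im w_n-2s_n\pi|<\pi/2$, we get
\[
2|s_n|\pi\le |\Im w_n|+\frac{\pi}{2}\le |w_n|+\frac{\pi}{2}.
\]
Hence
\[
(2|s_n|+1)\pi\le |w_n|+\frac{3\pi}{2}<|w_n|+2\pi\le f^n(\xi).
\]

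The base case $n=0$ is the definition of $\xi$. For the inductive step, assume $|w_n|+2\pi\le f^n(\xi)$. Then
\[
|w_{n+1}|=\lambda e^{\re w_n}\le \lambda e^{|w_n|}\le \lambda e^{f^n(\xi)-2\pi}=e^{-2\pi}f^{n+1}(\xi).
\]
It remains to check that
\[
e^{-2\pi}f^{n+1}(\xi)+2\pi\le f^{n+1}(\xi),
\]
which is equivalent to $f^{n+1}(\xi)\ge 2\pi/(1-e^{-2\pi})$. Since $\xi>\beta$ and $\beta$ is a repelling fixed point, the real orbit $f^n(\xi)$ is increasing. So it suffices to know that $f(\xi)$ is at least about $2\pi\cdot 1.002$.

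This last inequality is the only real obstacle. It is not automatic from $\xi>\beta$ alone, because $\beta$ may be close to $1$ when $\lambda$ is near $1/e$. I would handle it with the bound $\xi=|w|+2\pi\ge \beta+2\pi$, which holds since $|w|\ge\re w\ge\beta$ by Lemma \ref{basin}. This gives
\[
f(\xi)\ge \lambda e^{\beta}e^{2\pi}=\beta e^{2\pi}>e^{2\pi},
\]
which is far larger than $2\pi/(1-e^{-2\pi})$.

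The inequality $|\Im w_n|\le |w_n|$ is trivial, and $\re w_n\le |w_n|$ is what lets us pass to the real orbit. The result is therefore a routine comparison with the real orbit of $\xi$, with the $2\pi$ margin in the definition of $\xi$ absorbing the strip-width error at each step.
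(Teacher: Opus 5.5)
Your proof is correct and is essentially the paper's argument: bound $2|s_n|\pi$ by $|\Im w_n|+\pi/2\le |w_n|+\pi/2$, then compare $|w_n|$ with a real orbit using $|f(z)|=\lambda e^{\re z}\le f(|z|)$. The only difference is bookkeeping. The paper shows $|w_n|\le f^n(|w|)$ and then tacitly uses that $f^n$ expands on $[\beta,\infty)$ to get $f^n(|w|)+3\pi/2\le f^n(|w|+2\pi)$, whereas you carry the $2\pi$ margin through the induction and close each step with $f(\xi)\ge \beta e^{2\pi}$, which makes that step explicit.
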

\begin{proof}
    For $n \ge 0$, 
   \[ \begin{aligned}
    (2|s_n|+1)\pi &\le |\Im w_n|+3\pi/2\\
     &\le |w_n|+3\pi/2 \\
     &\le f^n(|w|) +3\pi /2 \\
     &\le f^n(\xi).\qedhere
    \end{aligned}\]
\end{proof}

Without loss of generality, we may assume that $z=\phi(0)$, $w=\phi(-\rho)$ for some $0<\rho <\epsilon$. Since $\phi$ is real analytic, it can be extended analytically to the disk $D(0,\epsilon)$.  Choose $M$ with $\max\{\xi, |z|\}<M<f(\eta)$ and $r\in (0,\epsilon)$ such that
\begin{equation}\label{1}
  \sup_{\zeta \in D(0,r)} |\phi(\zeta)|<M, \quad \re \phi(t)>\eta+1 \,\text{ for }\, |t|<r.  
\end{equation}

We claim that the estimate on the real part in \eqref{1} holds for all iterates.
\begin{lemma}\label{lm: real part of n-th iterate on the curve}
For $n \ge 0$ and $t \in (-r,r)$, we have $\re f^n(\phi(t))>f^n(\eta)+1$. 
\end{lemma}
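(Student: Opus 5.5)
Induction on $n$. The case $n=0$ is exactly the second estimate in \eqref{1}, since $f^0(\eta)=\eta$.

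For the inductive step, assume that $\re f^n(\phi(t))>f^n(\eta)+1$ for all $t\in(-r,r)$. Put $u=f^n(\phi(t))$; this point lies in $J_{\sigma^n\underline{s}}$, with $\sigma$ the shift, so $u\in P_{s_n}$ and \eqref{eq: imaginary part} applies. We have $|f(u)|=\lambda e^{\re u}>\lambda e^{f^n(\eta)+1}=e\,f^{n+1}(\eta)$. The task is to pass from this modulus bound to a bound on the real part of $f(u)$, which requires controlling $\Im f(u)=\Im f^{n+1}(\phi(t))$.

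By \eqref{eq: imaginary part} applied at step $n+1$, and then Lemma \ref{lm: estimate on the address},
\[|\Im f^{n+1}(\phi(t))|<2|s_{n+1}|\pi+\tfrac{\pi}{2}\le f^{n+1}(\xi).\]
Hence
\[\re f^{n+1}(\phi(t))\ge\sqrt{|f(u)|^2-f^{n+1}(\xi)^2}>\sqrt{e^2f^{n+1}(\eta)^2-f^{n+1}(\eta)^2},\]
where the last step uses $\xi<\eta$ and the monotonicity of $f$ on $\mathbb{R}$. This lower bound equals $\sqrt{e^2-1}\,f^{n+1}(\eta)\approx2.53\,f^{n+1}(\eta)$. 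It therefore exceeds $f^{n+1}(\eta)+1$, because $1.5\,f^{n+1}(\eta)\ge1.5\beta>1$: indeed $\eta>\beta$ and $f$ is increasing with $f(\beta)=\beta$, so $f^{n+1}(\eta)>\beta>1$.

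The only delicate point is the imaginary-part control. It works because every point of the curve, not only $w$, has the same address $\underline{s}$, so \eqref{eq: imaginary part} holds with the common $s_{n+1}$, and Lemma \ref{lm: estimate on the address} bounds $s_{n+1}$ uniformly in terms of $\xi<\eta$. I use that $\phi(t)\in J_{\underline{s}}$ for real $t$. Beyond that, the argument is routine numerics: the constant $e$ gained from the extra $+1$ in the exponent absorbs both the imaginary part and the additive $+1$.
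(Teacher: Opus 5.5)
Your proof is correct and matches the paper's argument: induction on $n$, the modulus bound $|f^{n+1}(\phi(t))|>e\,f^{n+1}(\eta)$, the imaginary-part bound $|\Im f^{n+1}(\phi(t))|\le f^{n+1}(\xi)<f^{n+1}(\eta)$ from \eqref{eq: imaginary part} and Lemma \ref{lm: estimate on the address}, and then the factor $\sqrt{e^2-1}$. You also spell out why $\sqrt{e^2-1}\,f^{n+1}(\eta)>f^{n+1}(\eta)+1$ (namely $f^{n+1}(\eta)>\beta>1$), which the paper leaves to the reader; the only step left tacit in both versions is that $\re f^{n+1}(\phi(t))>0$, which follows from $J(f)\subset\{\re z\ge\beta\}$ and is needed to take the positive square root.
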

\begin{proof}
    We prove by induction. For $n=0$, it is true by \eqref{1}. Now suppose that $\re f^k(\phi(t)) >f^k(\eta)+1$ holds for $k\leq n$. Then
\[|f^{n+1}(\phi(t))|=\lambda e^{\re f^n(\phi(t))}>e f^{n+1}(\eta).\]
By Lemma \ref{lm: estimate on the address} and the choice of $\xi$ and $\eta$, $|\Im f^{n+1}(\phi(t))| \le (2|s_{n+1}|+1) \pi \le f^{n+1}(\xi)<f^{n+1}(\eta)$. Thus 
$$\re f^{n+1}(\phi(t)) >\sqrt{e^2-1} \,f^{n+1}(\eta)>f^{n+1}(\eta)+1. \qedhere$$
\end{proof}

\medskip

We shall extend the estimates above to all points in $D(0,r)$. Define 
\[\psi(\zeta)=\overline{\phi(\overline{\zeta})} \quad\text{for}\quad\zeta \in D(0,r),\] which also satisfies the inequalities \eqref{1}. Put 
\[\Phi_n(\zeta):=f^n(\phi(\zeta))-f^n(\psi(\zeta))\]
and
\[\Psi_n(\zeta):=\Phi_n(\zeta)-4s_n\pi i.\]

\medskip

\begin{lemma}\label{lm: imaginary part close to a horizontal line}
For $0<r'<r$ and $\zeta \in D(0,r) \setminus [-r',r']$, we have
\[ |\Psi_n(\zeta)| \to 0\quad\text{as}\quad n \to \infty.\]
\end{lemma}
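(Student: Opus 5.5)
The idea is to prove that $\Psi_n$ is extremely small on the real segment $(-r,r)$ and at most moderately large on $D(0,r)$. A two-constants (harmonic measure) estimate then forces $\Psi_n\to 0$ at every point off the segment. Write $F_n:=f^n\circ\phi$. Since $f$ has real coefficients, $f^n(\psi(\zeta))=\overline{F_n(\overline{\zeta})}$, so
\[
\Phi_n(\zeta)=F_n(\zeta)-\overline{F_n(\overline{\zeta})}.
\]
In particular, for real $t\in(-r,r)$ we get $\Phi_n(t)=2i\,\Im F_n(t)$, and hence $\Psi_n(t)=2i\,\theta_n(t)$ with $\theta_n(t):=\Im F_n(t)-2s_n\pi$. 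By \eqref{eq: imaginary part} we have $|\theta_n(t)|<\pi/2$.

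\emph{Step 1: smallness on the segment.} Since $F_{n+1}=\lambda e^{F_n}$, we have
\[
\Im F_{n+1}(t)=|F_{n+1}(t)|\sin\theta_n(t).
\]
Lemma \ref{lm: real part of n-th iterate on the curve} gives $|F_{n+1}(t)|=\lambda e^{\re F_n(t)}\ge e\,f^{n+1}(\eta)$. Lemma \ref{lm: estimate on the address} gives $|\Im F_{n+1}(t)|\le f^{n+1}(\xi)$. Using $|\sin\theta|\ge 2|\theta|/\pi$ for $|\theta|\le\pi/2$, we obtain
\[
\varepsilon_n:=\sup_{|t|<r}|\Psi_n(t)|\le C\,\frac{f^{n+1}(\xi)}{f^{n+1}(\eta)}=C\exp\bigl(f^n(\xi)-f^n(\eta)\bigr),
\]
so $\log\varepsilon_n\le C'-(f^n(\eta)-f^n(\xi))$.

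\emph{Step 2: crude bound on the disk.} The inequality $|f(z)|\le f(|z|)$ iterates, and by \eqref{1} we have $|\phi|,|\psi|<M$ on $D(0,r)$. Together with Lemma \ref{lm: estimate on the address} this gives
\[
M_n:=\sup_{D(0,r)}|\Psi_n|\le 2f^n(M)+4|s_n|\pi\le 3f^n(M),
\]
so $\log M_n\le f^{n-1}(M)+O(1)$.

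\emph{Step 3: two constants.} Apply the two-constants theorem to the subharmonic function $\log|\Psi_n|$ on the slit domain $D(0,r)\setminus[-r',r']$. Its boundary contains the slit, where $\log|\Psi_n|\le\log\varepsilon_n$, and elsewhere $\log|\Psi_n|\le\log M_n$. Let $\omega(\zeta)$ be the harmonic measure of the slit; it satisfies $\omega(\zeta)>0$ at every point of the slit domain, and for such $\zeta$
\[
\log|\Psi_n(\zeta)|\le \omega(\zeta)\log\varepsilon_n+(1-\omega(\zeta))\log M_n .
\]
The choice $M<f(\eta)$ made earlier gives $f^{n-1}(M)<f^{n-1}(f(\eta))=f^n(\eta)$. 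Moreover $f^{n-1}(M)/f^n(\eta)\to0$, because iterating $f$ on $M<f(\eta)$ amplifies the ratio super-exponentially. Similarly $f^n(\xi)/f^n(\eta)\to0$ because $\xi<\eta$. Hence the right-hand side is at most $-\omega(\zeta)f^n(\eta)(1-o(1))+o(f^n(\eta))\to-\infty$, and so $\Psi_n(\zeta)\to0$ for each fixed $\zeta$.

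The main obstacle is this comparison of growth rates in Step 3. The inner radius choice $M<f(\eta)$ must dominate the loss from taking suprema over the whole disk, which is exactly why $M$ was chosen that way. I would check it carefully, together with the claim that the harmonic measure of the slit is positive at every point not on $[-r',r']$. I would also verify that the boundary of $D(0,r)$, where $\Psi_n$ is only known to be $\le M_n$, causes no trouble; if necessary I would shrink to a slightly smaller disk where \eqref{1} holds on the closure.
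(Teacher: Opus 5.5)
Your proposal is correct and is essentially the paper's proof. It has the same three ingredients: the segment bound $|\Psi_n(t)|\le C e^{f^n(\xi)-f^n(\eta)}$ obtained from the argument of $f^{n+1}(\phi(t))$, the crude bound of order $f^n(M)$ on $D(0,r)$, and the two-constants argument on $D(0,r)\setminus[-r',r']$ with the growth comparison driven by $\xi<\eta$ and $M<f(\eta)$. The differences are cosmetic: you use $|\sin\theta|\ge 2|\theta|/\pi$ where the paper uses $|\theta|\le|\tan\theta|$, and your identity $\Psi_n(t)=2i\theta_n(t)$ handles the reflected curve more cleanly than the paper's displayed estimate for $\psi$, which as written should read $+2s_n\pi$. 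One small correction: your bound $3f^n(M)$ in Step 2 is only justified for large $n$, so use $4f^n(M)$ as the paper does; this is harmless after taking logarithms.
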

\begin{proof}
    Note that $\Im f^n (\phi(t)) -2s_n\pi=\mathrm{arg} \; f^{n+1}(\phi(t))$, where $\mathrm{arg}$ is the principal branch of the argument. Therefore, by Lemmas \ref{lm: estimate on the address} and \ref{lm: real part of n-th iterate on the curve}, we have 
\begin{equation}\label{eq: estimate imaginary part of n-th iterate of the curve}
    |\Im f^n(\phi(t))-2s_n \pi| \le \frac{|\Im f^{n+1}(\phi(t))|}{|\re f^{n+1}(\phi(t))|} \le \frac{f^{n+1}(\xi)}{f^{n+1}(\eta)}.
\end{equation}
 Since $\lambda \in (0,1/e)$, $f(\overline{z})=\lambda e^{\overline{z}}=\overline{f(z)}$. Applying the same argument to $f^n \circ \psi$, we have, for $t \in (-r,r)$, 
 \begin{equation}\label{eq: estimate imaginary part of n-th iterate of the reflection curve}
    |\Im f^n(\psi(t))-2s_n \pi| \le \frac{f^{n+1}(\xi)}{f^{n+1}(\eta)}.
\end{equation}

Thus by \eqref{eq: estimate imaginary part of n-th iterate of the curve} and \eqref{eq: estimate imaginary part of n-th iterate of the reflection curve}, $|\Psi_n(t)| \le 2 e^{f^n(\xi)-f^n(\eta)}$. While on $D(0,r)$, $|\phi(\zeta)|=|\psi(\zeta)|<M$. Thus by Lemma \ref{lm: estimate on the address}, since $\xi<M$, 
$$|\Psi_n(\zeta)| \le 2f^n(M)+4\pi |s_n| \le 2f^n(M)+2f^n(\xi)<4f^n(M). $$

Define $a_n=2 e^{f^n(\xi)-f^n(\eta)}$, $b_n=4 f^n(M)$. Fix $r' \in (0,r)$ and let $I=[-r',r']$ and $D=D(0,r)$. Fix $\zeta \in D\setminus I$ and let $\omega=\omega(\zeta, I, D)$ be the harmonic measure of $I$ in $D \setminus I$ viewed from $\zeta$. By the two-constants theorem \cite[pp. 41]{nevanlinna}, for the harmonic function $\log |\Psi_n(\zeta)|$ in $D \setminus I$, we have 
\begin{equation}\label{eq: estimate on the disk}
    \log |\Psi_n(\zeta)| \leq \omega\log a_n+(1-\omega) \log b_n. 
\end{equation}
Thus, for sufficiently large $n$, 
\begin{equation*}
    \begin{split}
        \log |\Psi_n(\zeta)| & \le \omega(\log 2+f^n(\xi)-f^n(\eta))+(1-\omega) (\log 4+\log f^n(M)) \\
        & = \omega f^{n}(\xi)-\omega f^{n}(\eta)+(1-\omega)f^{n-1}(M) \\
        & \quad +(1-\omega) \log \lambda +(2-\omega) \log 2 \\
        & =-\omega f^n(\eta) \left(1+\frac{\omega f^n(\xi)+(1-\omega)f^{n-1}(M)}{-\omega f^n(\eta)}\right).
    \end{split}
\end{equation*}
Since $\xi<\eta$, $M<f(\eta)$, we have, for sufficiently large $n$, 
\begin{equation*}
    \log |\Psi_n(\zeta)| \le \frac{-\omega}{2} f^n(\eta) \to -\infty \quad \text{as} \quad n \to \infty.
\end{equation*}
This implies that, for $\zeta\in D\setminus I$, $|\Psi_n(\zeta)| \to 0$ as $n \to \infty$. 
\end{proof}

By Lemma \ref{basin}, $\mathcal{A}(\alpha)$ is dense in $\mathbb{C}$. Since $\phi$ and $\psi$ are holomorphic on $D(0,r)$, $\phi^{-1}(\mathcal{A}(\alpha))$ and $\psi^{-1}(\mathcal{A}(\alpha))$ are both open and dense in $D(0,r)$. Hence we can choose $\zeta \in D(0,r) \cap \phi^{-1}(\mathcal{A}(\alpha)) \cap \psi^{-1}(\mathcal{A}(\alpha))$. For such $\zeta$, we have both $f^n(\phi(\zeta))$ and $f^n(\psi(\zeta))$ converge to the attracting fixed point $\alpha$. Hence $|\Phi_n(\zeta)| \to 0$ as $n \to \infty$. This implies that $4s_n\pi i=\Phi_n(\zeta)-\Psi_n(\zeta) \to 0$. Since $s_n$ is an integer, it has to be identically zero for all sufficiently large $n$. 

To sum up, we obtain the following result.

\begin{proposition}
Suppose that $J_{\underline{s}} \neq \emptyset$ and there is a real analytic mapping $\phi:(-\epsilon, \epsilon) \to J_{\underline{s}}$. Then there is an integer $N \ge 0$ such that $s_n=0$ for $n \ge N$. 
\end{proposition}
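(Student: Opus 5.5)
The proposition is essentially a summary of the argument just carried out, so the plan is to assemble the preceding lemmas in order. First, I will fix the setup. Given a real analytic $\phi:(-\epsilon,\epsilon)\to J_{\underline{s}}$, I pick two distinct points $z=\phi(0)$ and $w=\phi(-\rho)$ on the image arc. Using Lemma \ref{lm: real part goes to infinity} and replacing $\phi$ by $f^{k}\circ\phi$ for suitable $k$ (still real analytic, since $f'\neq0$, with image in $f^k(J_{\underline{s}})=J_{\sigma^k\underline{s}}$), I may assume $d_n\to+\infty$. The shift is harmless, because the conclusion ``$s_n=0$ for $n\ge N$'' is insensitive to finitely many initial entries. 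After possibly reparametrising I can then choose $\xi<\eta<f(\eta)$, $|z|<f(\eta)$, and $M$, $r$ as in \eqref{1}.

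Second, I will invoke Lemmas \ref{lm: estimate on the address} and \ref{lm: real part of n-th iterate on the curve}, which give the lower bound $\re f^n(\phi(t))>f^n(\eta)+1$ on the real segment together with the address bound $(2|s_n|+1)\pi\le f^n(\xi)$. Then I form $\psi(\zeta)=\overline{\phi(\overline\zeta)}$, $\Phi_n$ and $\Psi_n$. Lemma \ref{lm: imaginary part close to a horizontal line} shows $\Psi_n(\zeta)\to0$ for every $\zeta\in D(0,r)\setminus[-r',r']$. Its proof has two parts: a doubly exponentially small bound on the real segment, which uses the symmetry $f(\bar z)=\overline{f(z)}$, and the crude bound $4f^n(M)$ on the disk. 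These are combined via the two-constants theorem. I expect this step to be the main obstacle, since the gain $-\omega f^n(\eta)$ must beat the loss $f^{n-1}(M)$, and that works only because $M<f(\eta)$.

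Third, I will use Lemma \ref{basin}, which says $\mathcal A(\alpha)$ is dense. Since $\phi$ and $\psi$ are nonconstant holomorphic maps on $D(0,r)$ (nonconstant because $z\neq w$), the preimages $\phi^{-1}(\mathcal A(\alpha))$ and $\psi^{-1}(\mathcal A(\alpha))$ are open and dense. I therefore choose $\zeta$ in their intersection and off $[-r',r']$. At this $\zeta$ both orbits tend to $\alpha$, so $\Phi_n(\zeta)\to0$. Combined with $\Psi_n(\zeta)\to0$, this gives $4\pi i s_n=\Phi_n(\zeta)-\Psi_n(\zeta)\to0$. Since the $s_n$ are integers, $s_n=0$ for all $n\ge N$, and undoing the initial shift by $k$ yields the proposition.
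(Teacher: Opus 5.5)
Your proposal is correct and follows the paper's proof step for step. It uses the same reduction to $d_n\to+\infty$ by passing to an iterate, the same address bound and lower bound on $\re f^n(\phi(t))$, the same two-constants estimate for $\Psi_n$ (with $M<f(\eta)$ making the gain $-\omega f^n(\eta)$ dominate), and the same choice of $\zeta$ with $\phi(\zeta),\psi(\zeta)\in\mathcal{A}(\alpha)$. The only difference is that you spell out two points the paper leaves implicit: the harmless shift $\underline{s}\mapsto\sigma^k\underline{s}$, and the nonconstancy of $\phi$ on $D(0,r)$, which follows from $z\neq w$ via the identity theorem because $w$ need not lie in $\phi(D(0,r))$.
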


Since it is assumed from the beginning that $J_{\underline{s}}$ is non-trivial, we immediately reach a contradiction. This completes the proof of Theorem \ref{maintheorem}.

\medskip

\begin{ack}
Cui was partially supported by NSFC (No. 12401105, 12522108) and Shandong Provincial Natural Science Fund for Excellent Young Scientists Program (Overseas) (No. 2025HWYQ-021). Huang was partially supported by the NSFC (No.12201420 and 12231013) and the NSF of Guangdong Province (No. 2026A1515010851).  Wang was supported by Shandong Postdoctoral Science Foundation (Grant No.\ SDZZ-ZR-202501290).
\end{ack}

\smallskip

\begin{AI}
The authors developed the general strategy and idea of this paper. AI tools were used to refine some estimates. The authors take full responsibility for the content and correctness of the paper.
\end{AI}


\bigskip

\noindent {\bf Weiwei Cui and Lingrui Wang}\\
 Research Center for Mathematics and Interdisciplinary Sciences\\
Shandong University, Qingdao, 266237, China.

\smallskip

\noindent weiwei.cui@sdu.edu.cn, lrwang@sdu.edu.cn

\bigskip

\noindent {\bf Jiaxing Huang}\\
School of Mathematical Sciences\\
 Shenzhen University, Guangdong, 518060, China

\smallskip

\noindent hjxmath@szu.edu.cn

\bigskip

\end{document}